\title[Weak sequential completeness]{On weak sequential completeness of spaces where weakly compact sets are super weakly compact}
\author[Z. Silber]{Zdeněk Silber}
\email{zdesil@seznam.cz}
\keywords{Superreflexivity, super weakly compact sets, weak sequential completeness}
\address{Institute of Mathematics of the Czech Academy of Sciences,
Žitná 25, 115 67 Prague 1, Czech Republic}
\subjclass[2020]{46B03, 46B20, 46B50}
\newcommand{\N}{\mathbb{N}}
\newcommand{\R}{\mathbb{R}}
\newtheorem{theorem}{Theorem}[section]
\newtheorem{prop}[theorem]{Proposition}
\newtheorem{corollary}[theorem]{Corollary}
\newtheorem{question}{Question}
\theoremstyle{definition}
\newtheorem{definition}[theorem]{Definition}
\newtheorem{remark}[theorem]{Remark}
\newcommand{\norm}[1]{\left\lVert#1\right\rVert}
\begin{document}

\baselineskip=17pt

\begin{abstract}
    We show that every Banach space in which weakly compact sets are super weakly compact in automatically weakly sequentially complete answering a question from \cite{Silber2024}. In the proof we show how to build a weakly compact set which is not super weakly compact from an arbitrary nontrivial weakly Cauchy sequence using the notion of a summing subsequence of Rosenthal or Singer.
\end{abstract}

\maketitle

\section{Introduction}

Super weakly compact sets are a localization of superreflexivity in the same sense as weakly compact sets are a localization of reflexivity. Many papers were devoted to their study \cite{Cheng2018,Cheng2010,GrelierRaja2022,LancienRaja2022,Raja2016,Kun2021}, and many interesting properties of super weakly compact sets are known. The focus of this paper are spaces in which every weakly compact set is super weakly compact -- these include superreflexive spaces, Schur spaces, $L_1(\mu)$, $C(K)^*$ or preduals of JBW$^*$ triples \cite[Section 6]{LancienRaja2022}, Lipschitz-free spaces over superreflexive spaces and duals of rich subspaces of $C(K,\R^n)$ \cite[Remark 3.4., Theorem 3.6.]{Silber2024}, or Lipschitz-free spaces over some special kinds of Carnot-Carathéodory metric spaces \cite[Theorem 3.7.]{AliagaPerneckaQuero2024}.

In \cite{Silber2024}, the so-called property (R) was investigated -- a Banach space $X$ has property (R) if and only if every weakly precompact subset of $X$ is relatively super weakly compact. It can be easily seen that property (R) of $X$ is equivalent to the fact that $X$ is weakly sequentially complete and every relatively weakly compact subset of $X$ is relatively super weakly compact. As all the known Banach spaces where every weakly compact set is super weakly compact (see the list in the previous paragraph) are also weakly sequentially complete, the following question was asked \cite[Question 1]{Silber2024}:
\begin{question} \label{Question}
    Is there a Banach space in which every weakly compact set is super weakly compact but which is not weakly sequentially complete?
\end{question}

In this paper we give a negative answer to this question. Our method is based on the notion of a summing sequence -- a seminormalised basic sequence on which we can define the summing functional. This notion was used by Rosenthal under the name wide-$(s)$ sequence \cite{Rosenthal1994} (the same notion was originally used under the name $P^*$ sequence by Singer \cite{Singer1962}). By a result from \cite{Rosenthal1994}, every nontrivial weakly Cauchy sequence (that is weakly Cauchy but not weakly convergent sequence) has a summing subsequence, and thus we can think of nontrivial weakly Cauchy summing sequences as a kind of witness sequences for failure of weak sequential completeness. We describe a way how to use a nontrivial weakly Cauchy summing sequence in a Banach space $X$ to find a subset of $X$ which is weakly compact but not super weakly compact, which will answer Question \ref{Question}. It follows that the assumption of weak sequential completeness in the definition of property (R) is redundant.

\section{The result}

Let us first recall the main notions we will work with in the rest of the paper.

\begin{definition}
    Let $A$ be a subset of a Banach space $X$. We say that $A$ is \textit{relatively super weakly compact} if $A^{\mathcal{U}}$ is relatively weakly compact in $X^{\mathcal{U}}$ for every free ultrafilter $\mathcal{U}$ on $\N$. We say that $A$ is \textit{super weakly compact} if it is relatively super weakly compact and weakly closed.
\end{definition}

Recall that $X^{\mathcal{U}}$ is the quotient of $\ell_{\infty} (X)$ by the null space $N_{\mathcal{U}} = \{(x_n)_{n \in \N} \in \ell_\infty(X): \lim_\mathcal{U} x_n = 0\}$ and $A^{\mathcal{U}}$ is the subset of $X^{\mathcal{U}}$, whose elements have representatives $(x_n)_{n \in \N}$ where $x_n \in A$ for each $n \in \N$. For more details see e.g. \cite{LancienRaja2022}. The following proposition by Lancien and Raja \cite[Theorem 3.7.]{LancienRaja2022} is a characterization of convex super weakly compact sets.

\begin{prop} \label{Proposition-SWCC-char}
    Let $A$ be a bounded closed convex subset of a Banach space $X$. The following are equivalent:
    \begin{enumerate}[(i)]
        \item $A$ is super weakly compact;
        \item There is no $\theta > 0$ such that for every $n \in \N$ there are sequences $(x^n_k)_{k \leq n} \subseteq A$, $(f^n_k)_{k \leq n} \subseteq B_{X^*}$ such that $f^n_k(x^n_j) = 0$ if $k > j$ and $f^n_k(x^n_j) = \theta$ if $k \leq j$.
    \end{enumerate}
\end{prop}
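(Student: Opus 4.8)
The plan is to prove the equivalence in its contrapositive form, namely that $A$ fails to be super weakly compact if and only if the separated systems in (ii) exist for some $\theta>0$. The bridge between the two sides is the classical double-limit characterization of relative weak noncompactness: a bounded closed convex subset $K$ of a Banach space $Y$ fails to be weakly compact if and only if there exist $\theta>0$ and sequences $(y_k)_{k\in\N}\subseteq K$ and $(g_k)_{k\in\N}\subseteq B_{Y^*}$ with $g_k(y_j)=0$ for $k>j$ and $g_k(y_j)=\theta$ for $k\leq j$; indeed, for such an array the two iterated limits $\lim_k\lim_j g_k(y_j)=\theta$ and $\lim_j\lim_k g_k(y_j)=0$ disagree, so Grothendieck's criterion rules out weak compactness, while the converse extraction is the standard consequence of James's theorem together with a gliding-hump normalization that turns the differing iterated limits into the exact $0/\theta$ pattern. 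I would apply this with $Y=X^{\mathcal{U}}$ and $K=\overline{A^{\mathcal{U}}}$, the super weak compactness of $A$ being by definition relative weak compactness of $A^{\mathcal{U}}$ for every free ultrafilter $\mathcal{U}$.

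For the direction ``not (ii) $\Rightarrow$ not (i)'', suppose $\theta>0$ is as in the negation of (ii), so for every $n$ there are finite arrays $(x^n_k)_{k\leq n}\subseteq A$ and $(f^n_k)_{k\leq n}\subseteq B_{X^*}$ with the prescribed pattern. Fixing any free ultrafilter $\mathcal{U}$ on $\N$ and setting (with arbitrary filler entries for $n<k$) $\tilde x_k=[(x^n_k)_n]\in A^{\mathcal{U}}$ and $\tilde g_k=[(f^n_k)_n]$ in the canonical isometric copy of $(X^*)^{\mathcal{U}}$ inside $(X^{\mathcal{U}})^*$, I would check that the biorthogonality relations pass to the ultrapower: for fixed $j,k$ the equality $f^n_k(x^n_j)=\theta$ (resp.\ $0$) holds for all $n\geq\max\{j,k\}$, hence $\tilde g_k(\tilde x_j)=\lim_{\mathcal{U}}f^n_k(x^n_j)$ equals $\theta$ when $k\leq j$ and $0$ when $k>j$. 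By the classical criterion, $\overline{A^{\mathcal{U}}}$ is not weakly compact, so $A$ is not super weakly compact.

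For ``not (i) $\Rightarrow$ not (ii)'', fix a free ultrafilter $\mathcal{U}$ with $A^{\mathcal{U}}$ not relatively weakly compact. The classical criterion supplies $\theta>0$ and infinite sequences $(\tilde x_k)\subseteq A^{\mathcal{U}}$, $(\tilde g_k)\subseteq B_{(X^{\mathcal{U}})^*}$ realizing the triangular pattern. Writing $\tilde x_k=[(x^n_k)_n]$ with $x^n_k\in A$, I would for each fixed length $n$ need to manufacture the finite array of functionals in $B_{X^*}$. Since only the finitely many relations indexed by $j,k\leq n$ are relevant, I would restrict $\tilde g_1,\dots,\tilde g_n$ to the finite-dimensional subspace spanned by $\tilde x_1,\dots,\tilde x_n$ and use the principle of local reflexivity (equivalently, the density underlying the embedding $(X^*)^{\mathcal{U}}\hookrightarrow(X^{\mathcal{U}})^*$ on finite sets) to replace them by functionals of the form $[(f^n_k)_n]$ with $f^n_k\in B_{X^*}$. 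For each $n$ the defining ultralimits then force, on a set of indices belonging to $\mathcal{U}$, the finite array $(x^n_k)_{k\leq n}$, $(f^n_k)_{k\leq n}$ to satisfy the pattern up to an error tending to $0$; absorbing this error into a slightly smaller constant and renormalizing the functionals yields, for each $n$, the exact finite system, i.e.\ the negation of (ii).

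The main obstacle is this last descent from the ultrapower back to exact finite data in $X$. Two points must be handled carefully: first, the functionals produced by the weak-noncompactness criterion live in the full dual $(X^{\mathcal{U}})^*$, which is strictly larger than $(X^*)^{\mathcal{U}}$, so pulling them back to bounded families of functionals on $X$ genuinely requires a local-reflexivity argument restricted to the finite systems at hand; second, condition (ii) demands the exact equalities $0$ and $\theta$, whereas the ultralimit argument only delivers them approximately, so the perturbation turning approximate biorthogonality into exact biorthogonality inside $B_{X^*}$ (while keeping the separation constant bounded away from $0$) is the delicate quantitative heart of the proof.
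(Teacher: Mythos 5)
First, a point of reference: the paper does not prove this proposition at all --- it is quoted from Lancien and Raja \cite[Theorem 3.7.]{LancienRaja2022} and used as a black box --- so there is no internal proof to compare yours against; your ultrapower architecture (finite arrays in $X$ versus an infinite James-type array in $X^{\mathcal{U}}$ versus failure of weak compactness of $A^{\mathcal{U}}$) is indeed the natural route and is essentially how the cited source proceeds. Your direction ``not (ii) $\Rightarrow$ not (i)'' is correct and complete as sketched: for fixed $j,k$ the relations hold for all $n\ge\max\{j,k\}$, a cofinite set, so they pass exactly to the ultralimit, and the double-limit criterion applies to the convex bounded set $A^{\mathcal{U}}$. (This is also the only direction the present paper actually uses.)

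The genuine gap is in the descent ``not (i) $\Rightarrow$ not (ii)'', precisely at the step you yourself flag as the ``delicate quantitative heart'' but then dispatch in one clause. After Helly/local reflexivity you have, for each $n$ and each $\delta>0$, arrays in $A\times(1+\varepsilon)B_{X^*}$ satisfying the pattern only up to $\delta$, and your proposed repair --- ``absorbing this error into a slightly smaller constant and renormalizing the functionals'' --- cannot work: multiplying $f^n_k$ by a scalar moves all $n$ of its values by the same factor, so it can correct at most one target, and no rescaling sends a tuple of values lying in $[-\delta,\delta]\cup[\theta-\delta,\theta+\delta]$ onto the two exact values $0$ and $\theta'$. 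Making the values exact requires a genuine re-interpolation (another Helly step in $X$, whose compatibility inequality $\lvert\sum_j a_j c_j\rvert\le M\lVert\sum_j a_j x^n_j\rVert$ is \emph{not} implied by the approximate array without further control), or else a reformulation of (ii) with inequalities $f^n_k(x^n_j)\le 0$ versus $\ge\theta$ together with a separate lemma that the inequality and equality versions are equivalent up to changing $\theta$. The same exactness issue is also hidden inside the ``classical criterion'' you invoke at the infinite level: the statement that a non-weakly-compact bounded closed convex set admits an array with the \emph{exact} $0/\theta$ pattern is a nontrivial refinement of the double-limit/James circle of results and needs either a precise citation or the same careful Helly-plus-perturbation argument. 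As written, the step that actually produces the exact finite systems --- which is the entire content of condition (ii) --- is missing.
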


\begin{definition}
    A sequence $(x_n)_{n \in \N}$ in a Banach space $X$ is \textit{weakly Cauchy} if for every $x^* \in X^*$ the sequence of scalars $(x^*(x_n))_{n \in \N}$ is convergent. We say that $(x_n)_{n \in \N}$ is \textit{nontrivial weakly Cauchy} if it is weakly Cauchy but not weakly convergent.    
    A Banach space $X$ is \textit{weakly sequentially complete} if every weakly Cauchy sequence in $X$ is weakly convergent.
\end{definition}

Application of the uniform boundedness principle yields that a sequence $(x_n)_{n \in \N}$ in $X$ is weakly Cauchy if and only if there is $x^{**} \in X^{**}$ such that $(x_n)_{n \in \N}$ converges to $x^{**}$ in the weak$^*$ topology of $X^{**}$, and it is nontrivial weakly Cauchy if this $x^{**}$ is an element of $X^{**} \setminus X$.

\begin{definition}
    A seminormalised basic sequence $(x_n)_{n \in \N}$ in a Banach space $X$ is called \textit{summing} if there is a functional $x^* \in X^*$ such that $x^*(x_n) = 1$ for every $n \in \N$. The functional $x^*$ is called the \textit{summing functional} of $(x_n)_{n \in \N}$.
\end{definition}

It can be easily seen that a seminormalised basic sequence $(x_n)_{n \in \N}$ is summing if and only if it dominates the summing basis of $c_0$ (that is the mapping $\sum_{n \in \N} a_n x_n \in \overline{\operatorname{span}} (x_n)_{n \in \N} \mapsto \sum_{n \in \N} a_n f_n \in c_0$ is bounded, where $(f_n)_{n \in \N}$ is defined by $f_n = e_1 + \cdots + e_n$ and $(e_n)_{n \in \N}$ is the canonical basis of $c_0$). Our notion of a summing sequence differs from the notion of an $(s)$ sequence of Rosenthal \cite[Definition 2.1.]{Rosenthal1994} in the fact that we do not apriori assume the sequence to be nontrivial weakly Cauchy. In the terminology of \cite{Rosenthal1994}, our summing sequences are precisely the wide-$(s)$ sequences, and in the terminology of \cite{Singer1962}, our summing sequences are precisely the $P^*$ sequences.

The following proposition is taken from \cite[Proposition 2.2.]{Rosenthal1994}.

\begin{prop} \label{Proposition-Sum-Subseq}
    Let $(x_n)_{n \in \N}$ be a nontrivial weakly Cauchy sequence in a Banach space $X$. Then $(x_n)_{n \in \N}$ has a summing subsequence.
\end{prop}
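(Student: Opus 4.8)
The plan is to extract from $(x_n)_{n\in\N}$ a basic subsequence and then manufacture an \emph{exact} summing functional by absorbing the small defect of a functional that is only asymptotically constant on the sequence. I begin with the standard reductions: by the uniform boundedness principle $(x_n)$ is bounded and converges weak$^*$ to some $x^{**}\in X^{**}$, and nontriviality gives $x^{**}\notin X$, hence $x^{**}\neq 0$ and $\delta:=\operatorname{dist}(x^{**},X)>0$. A norm-convergent subsequence would force its limit to equal $x^{**}$, so no subsequence of $(x_n)$ is norm convergent; in particular $\liminf_n\norm{x_n}>0$ (a norm-null subsequence would converge to $0\in X$), so after passing to a subsequence we may assume $0<c\le\norm{x_n}\le M$. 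Since $x^{**}\neq 0$, I fix $x_0^*\in X^*$ with $x_0^*(x^{**})=1$ and set $L=\norm{x_0^*}$; then $x_0^*(x_n)\to 1$.

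Next I would pass to a basic subsequence $(x_{n_k})_{k\in\N}$. As $(x_n)$ has no norm-convergent subsequence, one may select a basic subsequence by a Bessaga--Pełczyński gliding-hump argument applied to the weakly null difference sequence $(x_{n_k}-x_{n_{k-1}})_k$; these differences are weakly null because both endpoints tend to infinity, and they can be kept bounded below in norm using that $\liminf_m\norm{x_m-x_{n_{k-1}}}\ge\norm{x^{**}-x_{n_{k-1}}}\ge\delta$ by weak$^*$ lower semicontinuity of the norm. At the same time I would thin the subsequence so that the defects $\varepsilon_k:=x_0^*(x_{n_k})-1$ are summable, $\sum_k|\varepsilon_k|<\infty$, which costs nothing since $\varepsilon_k\to 0$. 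Let $K$ denote the basis constant of $(x_{n_k})$.

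Finally I would build the summing functional. The coordinate functionals satisfy $\norm{x_{n_k}^*}\le 2K/c$, so every finitely supported $S=\sum_k a_k x_{n_k}$ obeys $|a_k|\le (2K/c)\norm{S}$. Writing $\sum_k a_k=x_0^*(S)-\sum_k a_k\varepsilon_k$ and estimating termwise gives
\[
\Big|\sum_k a_k\Big|\le \norm{x_0^*}\,\norm{S}+\frac{2K}{c}\Big(\sum_k|\varepsilon_k|\Big)\norm{S}=\Big(L+\tfrac{2K}{c}\textstyle\sum_k|\varepsilon_k|\Big)\norm{S}.
\]
Hence $\varphi\big(\sum_k a_k x_{n_k}\big):=\sum_k a_k$ is well defined and bounded on $\overline{\operatorname{span}}(x_{n_k})$, and any Hahn--Banach extension $\varphi\in X^*$ satisfies $\varphi(x_{n_k})=1$ for all $k$. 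Thus $(x_{n_k})$ is a seminormalised basic sequence admitting a summing functional, i.e.\ a summing subsequence of $(x_n)$.

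The main obstacle is the extraction of the basic subsequence. A nontrivial weakly Cauchy sequence is not weakly null --- its terms cluster at $x^{**}$ --- so the gliding hump cannot be applied to $(x_n)$ itself and must be routed through the weakly null difference sequence, where the telescoping coupling of consecutive differences (each shares an endpoint with its predecessor) has to be tamed by a diagonal argument keeping the selected terms simultaneously ``deep'' with respect to the finitely many norming functionals already committed. By contrast, once a basic subsequence is available, upgrading the asymptotic identity $x_0^*(x_{n_k})\to 1$ to an exact summing functional is painless thanks to the summability of the defects $\varepsilon_k$.
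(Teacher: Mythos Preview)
The paper does not prove this proposition at all; it simply attributes the result to Rosenthal \cite{Rosenthal1994} and quotes it. So there is no argument in the paper to compare yours against, and what you are offering is a self-contained proof where the paper offers none.

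Your overall architecture is sound, and the second half is correct: once you have a seminormalised basic subsequence $(x_{n_k})$ with basis constant $K$ and with $\varepsilon_k:=x_0^*(x_{n_k})-1$ absolutely summable, the estimate
\[
\Big|\sum_k a_k\Big|\le \Big(L+\tfrac{2K}{c}\textstyle\sum_k|\varepsilon_k|\Big)\Big\|\sum_k a_k x_{n_k}\Big\|
\]
is valid and produces an exact summing functional via Hahn--Banach. That part is clean.

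The gap is in the extraction of the basic subsequence via telescoping differences. In the Mazur construction one chooses, at stage $k$, a finite set $G_k\subseteq B_{X^*}$ norming $\operatorname{span}(d_1,\dots,d_k)$, and then selects the next vector so that each $g\in G_k$ is small on it. For $d_{k+1}=x_{n_{k+1}}-x_{n_k}$ this means forcing $g(x_{n_{k+1}})\approx g(x_{n_k})$. Pushing $n_{k+1}$ far out only gives $g(x_{n_{k+1}})\approx x^{**}(g)$; you would also need $g(x_{n_k})\approx x^{**}(g)$. But $G_k$ was chosen \emph{after} $n_k$ (it must norm a space that already contains $d_k$), so for the new functionals in $G_k\setminus G_{k-1}$ you have no control over $g(x_{n_k})$. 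The phrase ``diagonal argument keeping the selected terms deep'' does not resolve this: however you interleave the choices, each $n_k$ is fixed before the functionals needed to norm $d_k$ are introduced. This is a genuine obstruction, not a bookkeeping issue.

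A clean repair is to bypass the differences entirely and run Mazur on $(x_n)$ itself, drawing the norming functionals from $\ker x^{**}=\{g\in X^*:x^{**}(g)=0\}$. Since $x^{**}\notin X$, the hyperplane $\ker x^{**}$ intersected with $B_{X^*}$ $C$-norms $X$ for some finite $C$: otherwise there would be unit vectors $y_n\in X$ whose canonical images $\hat y_n$ lie arbitrarily close to $\operatorname{span}(x^{**})$ in $X^{**}$, forcing $x^{**}\in\overline{X}=X$. For $g\in\ker x^{**}$ one has $g(x_n)\to x^{**}(g)=0$, so the standard gliding-hump selection applies verbatim and yields a basic subsequence $(x_{n_k})$ with basis constant controlled by $C$. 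After that, your defect-correction argument finishes the proof.
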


\begin{remark}
    A Banach space $X$ is not reflexive if and only if it contains a summing sequence. This has been proven by Singer \cite[Theorem 2.]{Singer1962} for $X$ with a basis and by Pe{\l}czy{\'n}ski \cite{Pelczynski1962} for general $X$. We will now show an alternative proof.
    
    First note that a summing sequence cannot have a weakly convergent subsequence (as the only possible weak cluster point of a seminormalised basic sequence is zero, see e.g. \cite[Lemma 1.6.1.]{nigel2006topics}, and the summing functional witnesses that no subsequence of the summing sequence can be weakly null), and thus a Banach space containing a summing sequence cannot be reflexive. On the other hand, if a Banach space $X$ is not reflexive, we can find a seminormalised sequence without weakly convergent subsequences. By Rosenthal's $\ell_1$ theorem \cite{Rosenthal1974} we can pass to a subsequence which is either weakly Cauchy or equivalent to the canonical basis of $\ell_1$. In the first case, we get a nontrivial weakly Cauchy sequence which has a summing subsequence by Proposition \ref{Proposition-Sum-Subseq}. In the second case, we just need to realize that the canonical basis of $\ell_1$ is clearly summing.
\end{remark}

\begin{prop} \label{Proposition-Sum-Fctionals}
    Let $(x_n)_{n \in \N}$ be a summing sequence in a Banach space $X$. Then there is a uniformly bounded sequence of functionals $(s_n)_{n \in \N} \subseteq X^*$ such that $s_n(x_m) = 1$ if $n \leq m$ and $s_n(x_m) = 0$ if $n > m$.
\end{prop}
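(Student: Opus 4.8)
The plan is to build the functionals $s_n$ directly from the two pieces of data that a summing sequence carries: the summing functional $x^*$ with $x^*(x_n) = 1$ for all $n$, and the coordinate functionals of the basic sequence. Write $Y = \overline{\operatorname{span}} (x_n)_{n \in \N}$ and let $(x_n^*)_{n \in \N} \subseteq Y^*$ be the biorthogonal functionals of the basis $(x_n)$ of $Y$, so that $x_k^*(x_m) = \delta_{km}$. I would then define
$$ s_n := x^* - \sum_{k=1}^{n-1} x_k^* \in Y^*, \qquad n \in \N, $$
with the convention that an empty sum is zero, so that $s_1 = x^*|_Y$. A one-line computation using biorthogonality and $x^*(x_m) = 1$ gives $s_n(x_m) = 1 - \sum_{k=1}^{n-1} \delta_{km}$, which equals $1$ when $m \geq n$ and $0$ when $m < n$; this is precisely the required pattern of values.

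The only genuine work is to verify that the $s_n$ are uniformly bounded, since a priori the norms of the partial sums $\sum_{k<n} x_k^*$ could grow with $n$. The key observation is that, because $x^*$ is the summing functional, the partial sum $\sum_{k=1}^{n-1} x_k^*$ coincides on $Y$ with $x^* \circ P_{n-1}$, where $P_{n-1}$ denotes the canonical basis projection $\sum_j a_j x_j \mapsto \sum_{j \leq n-1} a_j x_j$ of $Y$: indeed both functionals send $\sum_j a_j x_j$ to $\sum_{j \leq n-1} a_j$. Consequently $s_n = x^* \circ (I - P_{n-1})$ on $Y$, and since the projections $P_N$ are uniformly bounded by the basis constant $K$ of $(x_n)$, I obtain $\norm{s_n} \leq \norm{x^*}(1+K)$ for every $n$. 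This recognition of the partial sums of coordinate functionals as $x^* \circ P_{n-1}$ is the step I expect to be the crux, as it reduces uniform boundedness to the automatic uniform boundedness of the basis projections.

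Finally I would extend each $s_n$ from $Y$ to the whole space $X$ by the Hahn--Banach theorem, preserving the norm. Since every $x_m$ lies in $Y$, the values $s_n(x_m)$ are unaffected by the extension, and the resulting sequence $(s_n)_{n \in \N} \subseteq X^*$ is bounded by $\norm{x^*}(1+K)$, which completes the argument.

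As an alternative that avoids passing through the basis constant, one can read off the same functionals from the $c_0$-domination characterization recalled above: if $T \colon Y \to c_0$ is the bounded operator with $T x_m = f_m = e_1 + \cdots + e_m$, then setting $s_n = e_n^* \circ T$ (where $e_n^*$ is the $n$-th coordinate functional of $c_0$) gives $s_n(x_m) = e_n^*(f_m)$, which is $1$ when $n \leq m$ and $0$ otherwise, together with $\norm{s_n} \leq \norm{T}$; extending by Hahn--Banach then yields the uniform bound directly.
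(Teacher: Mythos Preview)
Your proof is correct and essentially the same as the paper's: the paper's functional $x_{n-1}^*$ (defined by $\sum_j a_j x_j \mapsto \sum_{j<n} a_j$) is exactly your $\sum_{k<n} x_k^* = x^*\circ P_{n-1}$, so the resulting $s_n = x^* - x_{n-1}^*$ coincide. The only cosmetic difference is that the paper deduces uniform boundedness from the weak$^*$ convergence $x_n^* \to x^*$ together with the uniform boundedness principle, whereas you read off the explicit bound $\norm{x^*}(1+K)$ directly from the basis constant.
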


\begin{proof}
    By the Hahn-Banach theorem we can assume that $(x_n)_{n \in \N}$ is a basis of $X$, that is, we can assume that $X = \overline{\operatorname{span}}(x_n)_{n \in \N}$. For $n \in \N$ define $x^*_n \in X^*$ by the formula
    \begin{align*}
        x^*_n(x) = \sum_{j=1}^n a_j, \hspace{1cm} x = \sum_{j=1}^\infty a_j x_j \in X.
    \end{align*}
    Then it is easy to check that $(x_n^*)_{n \in \N}$ weak$^*$ converges to the summing functional $x^*$ of $(x_n)_{n \in \N}$ in $X^*$. In particular, $(x_n^*)_{n \in \N}$ is bounded. Set $s_1 = x^*$ and for $n > 1$ set $s_n = x^* - x_{n-1}^*$. Then $(s_n)_{n \in \N}$ clearly satisfies the conclusion of the proposition.
\end{proof}

\begin{theorem}
    Let $X$ be a Banach space where relatively weakly compact sets are relatively super weakly compact. Then $X$ is weakly sequentially complete.
\end{theorem}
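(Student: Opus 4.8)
The plan is to argue by contraposition: assuming $X$ is not weakly sequentially complete, I will produce a bounded closed convex set which is weakly compact but not super weakly compact, contradicting the hypothesis. If $X$ is not weakly sequentially complete it carries a nontrivial weakly Cauchy sequence, and after passing to a subsequence via Proposition \ref{Proposition-Sum-Subseq} I may assume this sequence $(x_n)_{n\in\N}$ is summing; being a subsequence of a nontrivial weakly Cauchy sequence it is still nontrivial weakly Cauchy, so it converges in the weak$^*$ topology of $X^{**}$ to some $x^{**}\in X^{**}\setminus X$. Proposition \ref{Proposition-Sum-Fctionals} then furnishes functionals $(s_n)_{n\in\N}\subseteq X^*$, uniformly bounded by some $M$, with $s_n(x_m)=1$ for $n\le m$ and $s_n(x_m)=0$ for $n>m$.

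The heart of the construction is to realise the triangular pattern of Proposition \ref{Proposition-SWCC-char}(ii) inside a single weakly compact set. For $n\in\N$ and $1\le k\le n$ I would set
\[
x^n_k=x_{n+k}-x_n,\qquad f^n_k=\tfrac{1}{M}\,s_{n+k}\qquad(1\le k\le n).
\]
Each $f^n_k$ lies in $B_{X^*}$, and a direct computation from the defining relations of $(s_n)_{n\in\N}$ gives $f^n_k(x^n_j)=\tfrac1M\big(s_{n+k}(x_{n+j})-s_{n+k}(x_n)\big)$, which equals $\tfrac1M$ when $k\le j$ and $0$ when $k>j$. These are exactly the arrays required by Proposition \ref{Proposition-SWCC-char}(ii) with the constant $\theta=1/M$. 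The point of using the differences $x_{n+k}-x_n$ rather than the vectors $x_n$ themselves is that the summing functionals do not see the base point $x_n$, so the triangular relation survives intact while the vectors are driven toward being weakly null.

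It then remains to house all these vectors in a weakly compact set. The candidate is $A=\overline{\operatorname{conv}}\,W$, where $W=\{x_{n+k}-x_n: n\in\N,\ 1\le k\le n\}\cup\{0\}$; since $(x_n)_{n\in\N}$ is seminormalised, $W$ is bounded, and by the Krein--\v{S}mulian theorem $A$ is weakly compact (and bounded closed convex, hence weakly closed) as soon as $W$ is relatively weakly compact. Establishing this relative weak compactness is the step I expect to be the main point, and it is precisely where the weak Cauchy hypothesis is used: given any sequence drawn from $W$, either its outer index $n$ stays bounded, in which case it ranges over a finite set, or the outer index tends to infinity along a subsequence, in which case both indices $n$ and $n+k$ tend to infinity and the weak$^*$ convergence $x_n\to x^{**}$ forces the differences $x_{n+k}-x_n$ to converge weakly to $0\in X$. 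By the Eberlein--\v{S}mulian theorem $W$ is therefore relatively weakly compact, so $A$ is weakly compact, and by hypothesis $A$ is super weakly compact. This contradicts Proposition \ref{Proposition-SWCC-char}, since the arrays $(x^n_k)$ and $(f^n_k)$ constructed above witness condition (ii) for $\theta=1/M$ with all $x^n_k\in W\subseteq A$. It is worth noting that this last step is exactly what fails for non-reflexive weakly sequentially complete spaces such as $\ell_1$, where the analogous summing sequence is not weakly Cauchy and the differences stay bounded away from $0$, so that $W$ is not relatively weakly compact.
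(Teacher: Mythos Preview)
Your proof is correct and follows essentially the same route as the paper: the same differences $x_{n+k}-x_n$, the same functionals $s_{n+k}$ (suitably normalised), and the same appeal to Krein--\v{S}mulian and Proposition~\ref{Proposition-SWCC-char}. Your treatment of the relative weak compactness of $W$ via Eberlein--\v{S}mulian and the case split on the outer index is slightly more explicit than the paper's one-line ``weakly null sequence'' remark, and your closing comment on $\ell_1$ is a nice addition, but the argument is the same.
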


\begin{proof}
    Suppose that $X$ is not weakly sequentially complete. Then there is a nontrivial weakly Cauchy sequence $(x_n)_{n \in \N}$ in $X$. Using Proposition \ref{Proposition-Sum-Subseq} we can without loss of generality assume that $(x_n)_{n \in \N}$ is summing and denote by $(s_n)_{n \in \N}$ the functionals from Proposition \ref{Proposition-Sum-Fctionals}. Let us define
    \begin{align*}
        y^n_j = x_{n+j} - x_n, \hspace{1cm} n \in \N, \; j \leq n.    
    \end{align*}
    Then $\{y^n_j: n \in \N, j \leq n\}$ is a relatively weakly compact subset of $X$. Indeed, it follows from the fact that $(x_n)_{n \in \N}$ is weakly Cauchy that $\{y^n_j: n \in \N, j \leq n\}$ is actually a weakly null sequence. Hence, $A = \overline{\operatorname{conv}} \{y^n_j: n \in \N, j \leq n\}$ is weakly compact by the Krein-Šmulian theorem. We will use Proposition \ref{Proposition-SWCC-char} to show that $A$ is not super weakly compact. Define $(g^n_j)_{j \leq n}$, $n \in \N$, by $g^n_j = s_{n+j}$, then
    \begin{align*}
        g^n_j(y^n_i) = s_{n+j} (x_{n+i} - x_n) = s_{n+j}(x_{n+i}) =
        \begin{cases}
		  1, & \text{if $j \leq i$}\\
            0, & \text{if $j > i$}.
		 \end{cases}
    \end{align*}
    Hence, if we set $c = \sup \norm{s_n} = \sup \norm{g^n_j}$, and $f^n_j = c^{-1} g^n_j$ for $j \leq n$, then $(y^n_j)_{j \leq n} \subseteq A$ and $(f^n_j)_{j \leq n} \subseteq B_{X^*}$, $n \in \N$, witness the failure of Proposition \ref{Proposition-SWCC-char} (ii) with $\theta = c^{-1}$. It follows by Proposition \ref{Proposition-SWCC-char} that $A$ is not relatively super weakly compact. Hence, it is not true that relatively weakly compact sets in $X$ are relatively super weakly compact, and the theorem follows.
\end{proof}

Recall the following definition of property (R) from \cite{Silber2024}.

\begin{definition}
    We say that a Banach space $X$ has property (R) if every weakly precompact subset of $X$ is relatively super weakly compact.
\end{definition}

It can be easily seen that a Banach space $X$ has property (R) if and only if it is weakly sequentially complete and every weakly compact subset of $X$ is super weakly compact. In the light of the preceding theorem, the assumption of weak sequential completeness in the definition of property (R) is redundant -- we have:

\begin{corollary}
    Let $X$ be a Banach space. The following are equivalent:
    \begin{enumerate}
        \item $X$ has property (R);
        \item Every weakly compact subset of $X$ is super weakly compact.
    \end{enumerate}
\end{corollary}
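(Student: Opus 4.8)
The plan is to prove the two implications separately, leaning on the characterization of property (R) recalled just above the statement, namely that $X$ has property (R) if and only if $X$ is weakly sequentially complete and every weakly compact subset of $X$ is super weakly compact.

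For the implication $(1) \Rightarrow (2)$ there is essentially nothing to do: if $X$ has property (R), then by the quoted characterization $X$ is in particular a space in which every weakly compact subset is super weakly compact, which is exactly $(2)$.

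The real content lies in $(2) \Rightarrow (1)$, and here the strategy is to feed $(2)$ into the preceding Theorem. First I would check that $(2)$ forces the hypothesis of that Theorem, i.e. that in $X$ every relatively weakly compact set is relatively super weakly compact. Given a relatively weakly compact set $B$, its weak closure $\overline{B}^{\,w}$ is weakly compact, hence super weakly compact by $(2)$, and in particular relatively super weakly compact. Then I would invoke the elementary stability of relative super weak compactness under passing to subsets: if $A$ is relatively super weakly compact and $B \subseteq A$, then for every free ultrafilter $\mathcal{U}$ on $\N$ the elements of $B^{\mathcal{U}}$ admit representatives with entries in $B \subseteq A$, so $B^{\mathcal{U}} \subseteq A^{\mathcal{U}}$, and a subset of the relatively weakly compact set $A^{\mathcal{U}}$ in $X^{\mathcal{U}}$ is again relatively weakly compact. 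Applying this with $A = \overline{B}^{\,w}$ shows that $B$ is relatively super weakly compact.

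Once this reduction is in place, the Theorem applies and gives that $X$ is weakly sequentially complete; combining this with $(2)$ and invoking the characterization once more yields property (R), which is $(1)$. I do not expect any genuine obstacle in this argument: the only step that is not purely formal is the reduction of $(2)$ to the hypothesis of the Theorem, and that rests solely on the fact that relative weak compactness (and hence relative super weak compactness) is preserved under taking the weak closure and under passing to subsets. The substantive work has already been carried out in the Theorem, so the corollary is really just an unpacking of definitions around it.
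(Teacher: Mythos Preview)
Your proposal is correct and follows exactly the approach the paper intends: the paper does not give a formal proof of the corollary but simply observes, in the paragraph preceding it, that the characterization of property (R) as ``weakly sequentially complete $+$ every weakly compact set is super weakly compact'' together with the Theorem makes the weak sequential completeness assumption redundant. The only thing you add is the explicit verification that condition (2) implies the hypothesis of the Theorem (passing to the weak closure and then back to a subset), which the paper leaves implicit; this step is routine and correct.
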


\bibliographystyle{siam}
\bibliography{bibliography}

\begin{thebibliography}{10}

\bibitem{nigel2006topics}
{\sc F.~Albiac and N.~Kalton}, {\em Topics in Banach Space Theory}, no.~sv. 10 in Graduate Texts in Mathematics, Springer, 2006.

\bibitem{AliagaPerneckaQuero2024}
{\sc R.~J. Aliaga, E.~Perneck{\'a}, and A.~Quero}, {\em Pe{{\l}}czy{\'n}ski's property ({V}$^*$) in {Lipschitz}-free spaces}, Arxiv preprint:2411.15107,  (2024).

\bibitem{Cheng2018}
{\sc L.~Cheng, Q.~Cheng, S.~Luo, K.~Tu, and J.~Zhang}, {\em On super weak compactness of subsets and its equivalences in {Banach} spaces}, J. Convex Anal., 25 (2018), pp.~899--926.

\bibitem{Cheng2010}
{\sc L.~Cheng, Q.~Cheng, B.~Wang, and W.~Zhang}, {\em On super-weakly compact sets and uniformly convexifiable sets}, Stud. Math., 199 (2010), pp.~145--169.

\bibitem{GrelierRaja2022}
{\sc G.~Grelier and M.~Raja}, {\em On uniformly convex functions}, J. Math. Anal. Appl., 505 (2022), p.~25.
\newblock Id/No 125442.

\bibitem{LancienRaja2022}
{\sc G.~Lancien and M.~Raja}, {\em Nonlinear aspects of super weakly compact sets}, Ann. Inst. Fourier, 72 (2022), pp.~1305--1328.

\bibitem{Pelczynski1962}
{\sc A.~Pe{\l}czy{\'n}ski}, {\em A note on the paper of {I}. {Singer} '{Basic} sequences and reflexivity of {Banach} spaces'}, Stud. Math., 21 (1962), pp.~371--374.

\bibitem{Raja2016}
{\sc M.~Raja}, {\em Super {WCG} {Banach} spaces}, J. Math. Anal. Appl., 439 (2016), pp.~183--196.

\bibitem{Rosenthal1974}
{\sc H.~P. Rosenthal}, {\em A characterization of {B}anach spaces containing $l_1$}, Proceedings of the National Academy of Sciences, 71 (1974), pp.~2411--2413.

\bibitem{Rosenthal1994}
\leavevmode\vrule height 2pt depth -1.6pt width 23pt, {\em A characterization of {B}anach spaces containing $c_0$}, Journal of the American Mathematical Society, 7 (1994), pp.~707--748.

\bibitem{Silber2024}
{\sc Z.~Silber}, {\em Weak compactness in {L}ipschitz-free spaces over superreflexive spaces}, To appear in: Proc. Am. Math. Soc., Arxiv preprint:2408.01135,  (2024).

\bibitem{Singer1962}
{\sc I.~Singer}, {\em Basic sequences and reflexivity of {Banach} spaces}, Stud. Math., 21 (1962), pp.~351--369.

\bibitem{Kun2021}
{\sc K.~Tu}, {\em Convexification of super weakly compact sets and measure of super weak noncompactness}, Proc. Am. Math. Soc., 149 (2021), pp.~2531--2538.

\end{thebibliography}

\end{document}